\newtheorem{theorem}{Theorem}[section]
\newtheorem{lemma}[theorem]{Lemma}
\newtheorem*{thma}{Theorem A}
\theoremstyle{definition}
\theoremstyle{remark}
\newtheorem{remark}[theorem]{Remark}
\numberwithin{equation}{section}
\def\R{{\mathbb R}}
\def\M{{\mathcal M}}
\def\intslash{\rlap{\kern  .32em $\mspace {.5mu}\backslash$ }\int}
\def\qsl{{\rlap{\kern  .32em $\mspace {.5mu}\backslash$ }\int_{Q_x}}}
\def\S{\mathbf S}
\def\emph#1{{\it #1 }}
\def\pari{\partial}
\def\eg{{\it e.g. }}
\def\lc{\lesssim}
\def\pv{\text{\rm p.v.}}
\def\alp{\alpha}
\def\eps{\varepsilon}
\def\tet{\theta}
\def\lam{\lambda}            \def\Lam{\Lambda}
\def\si{\sigma}
              \def\Om{\Omega}
\def\fr{\frac}
\newcommand{\Be}{\begin{equation}}
\newcommand{\Ee}{\end{equation}}
\newcommand{\Bes}{\begin{equation*}}
\newcommand{\Ees}{\end{equation*}}
\newcommand{\Bsp}{\begin{split}}
\newcommand{\Esp}{\end{split}}
\newcommand{\Bm}{\begin{multline}}
\newcommand{\Em}{\end{multline}}
\newcommand{\Bea}{\begin{eqnarray}}
\newcommand{\Eea}{\end{eqnarray}}
\newcommand{\Beas}{\begin{eqnarray*}}
\newcommand{\Eeas}{\end{eqnarray*}}
\newcommand{\Benu}{\begin{enumerate}}
\newcommand{\Eenu}{\end{enumerate}}
\newcommand{\Bi}{\begin{itemize}}
\newcommand{\Ei}{\end{itemize}}
\begin{document}

\title[Bilinear endpoint estimates for commutator]{Bilinear endpoint estimates for Calder\'on commutator with rough kernel}

\author{Xudong Lai}
\address{Xudong Lai: Institute for Advanced Study in Mathematics, Harbin Institute of Technology, Harbin, 150001, People's Republic of China
\endgraf
}
\email{xudonglai@hit.edu.cn\ xudonglai@mail.bnu.edu.cn}

\subjclass[2010]{42B20}

\date{October 25, 2017}


\keywords{Bilinear estimate, Endpoint, Calder\'on commutator, rough kernel}

\begin{abstract}
In this paper, we establish some bilinear endpoint estimates of  Calder\'on commutator $\mathcal{C}[\nabla A,f](x)$ with a homogeneous kernel when $\Om\in L\log^+L(\mathbf{S}^{d-1})$. More precisely, we prove that $\mathcal{C}[\nabla A,f]$ maps $L^q(\R^d)\times L^1(\R^d)$ to $L^{r,\infty}(\R^d)$ if $q>d$ which improves previous result essentially. If $q=d$, we show that Calder\'on commutator maps $L^{d,1}(\R^d)\times L^1(\R^d)$ to $L^{r,\infty}(\R^d)$ which is new even if the kernel is smooth. The novelty in the paper is that we prove a new endpoint estimate of the Mary Weiss maximal function which may have its own interest in the theory of singular integral.
\end{abstract}

\maketitle

\section{Introduction}
The purpose of this paper is to study some bilinear endpoint estimates
which are unsolved in the previous work of A. P. Calder\'on \cite{Cal65}, C. P. Calder\'on \cite{CCal75,CCal79}. Before stating our results, we give some notation and background.

In 1965, A. P. Calder\'on introduced the commutator defined by
\Be\label{e:11com}
\mathcal{C}[\nabla A, f](x)=\pv \int_{\R^d} \fr{\Om(x-y)}{|x-y|^d}\fr{A(x)-A(y)}{|x-y|}\cdot f(y)dy,
\Ee
which is called the {\it Calder\'on commutator\/}, here $\Om$ is a function defined on $\R^d\setminus\{0\}$ which satisfies:
\begin{equation}\label{e:11Omho}
\Om(r\tet)=\Om(\tet)\ \ \text{for all $r>0$ and $\tet\in\mathbf{S}^{d-1}$;}
\end{equation}
\begin{equation}\label{e:11Omcan}
\int_{\mathbf{S}^{d-1}}\Om(\tet)\tet^{\alp}d\si(\tet)=0\ \ \text{for multi-index $|\alp|=1$};
\end{equation}
and $\Om\in L^1(\mathbf{S}^{d-1})$. $\mathbf{S}^{d-1}$ is the unit surface in $\R^d$ and $d\si$ denotes the surface measure on $\mathbf{S}^{d-1}$.
It is easy to see that $\mathcal{C}[\nabla A,f](x)$ is well defined if $A$ and $f$ are smooth functions with compact supports.
Calder\'on commutator $\mathcal{C}[\nabla A,f](x)$ is a typical example of non-convolution Calder\'on-Zygmund singular integral.
We can regard $\mathcal{C}[\nabla A,f](x)$ as a generalization of
\Bes
\begin{split}
[A, S]f(x)&=A(x)S(f)(x)-S(Af)(x)\\
&=-\pv\fr{1}{\pi}\int_{\R}\fr{1}{x-y}\fr{A(x)-A(y)}{x-y}f(y)dy
\end{split}
\Ees
where $S=\fr{d}{dx}\circ H$ and $H$ denotes the Hilbert transform.
It is well known that the commutator $[A, S]$ is a fundamental operator in harmonic analysis and plays an important role in the theory of the Cauchy integral along Lipschitz curve in $\mathbb{C}$, the boundary value problem of elliptic equation on non-smooth domain, and the Kato square root problem on $\R$ (see \eg \cite{Cal65}, \cite{Cal78}, \cite{Fef74}, \cite{MC97}, \cite{Gra250} for the details). Recently, there has been a renewed interest into the commutator $[A,S]$ and {\it d-commutator} introduced by Christ-Journ\'e (see \cite{CJ87}) since it has application in the mixing flow problem (see \eg \cite{SSS15}, \cite{HSSS17}).

In this paper, we are interested in the following strong bilinear estimate (or weak type estimate)
\Be\label{e:11bc}
\|\mathcal{C}[\nabla A,f]\|_{L^r(\R^d)}\leq C\|\nabla A\|_{L^q(\R^d)}\|f\|_{L^p(\R^d)},
\Ee
with $\fr{1}{r}=\fr{1}{q}+\fr{1}{p}$. Let us recall some historic literature about the above inequality. We divide them into three cases (see also the complete picture of $(1/p,1/q)$ in Figure 1 in Theorem \ref{t:11}).

\emph{Case $r\geq1$.}A. P. Calder\'on \cite{Cal65} showed that if $\fr{1}{r}=\fr{1}{q}+\fr{1}{p}$ with $1<r<\infty$, $1<q\leq\infty$, $1<p<\infty$, then \eqref{e:11bc}
holds when $\Omega$ satisfies the condition \eqref{e:11Omho} and
either the property (p-i) or (p-ii) defined as follows:
\begin{enumerate}[(p-1).]
\item \qquad $\Om$ is even and $\Om\in L^1(\mathbf{S}^{d-1})$;

\item \qquad $\Om\in L\log^+L(\mathbf{S}^{d-1})$ is odd and satisfies \eqref{e:11Omcan}.
\end{enumerate}
Later C. P. Calder\'on \cite{CCal75} proved \eqref{e:11bc} is still true in the case $r=1$, $1<p<\infty$, $1=\fr{1}{q}+\fr{1}{p}$, and also the case $1<r=q<\infty$, $p=\infty$ where $\Omega$ satisfies the condition \eqref{e:11Omho} and either the property (p-i) or (p-ii).
For the endpoint case $(q,p)=(\infty,1)$, Y. Ding and the author \cite{DL15a} recently proved that if $\Om$ satisfies \eqref{e:11Omho}, \eqref{e:11Omcan} and $\Om\in L\log^+L(\mathbf{S}^{d-1})$, then $\mathcal{C}[\nabla A,f](x)$ is bounded from $L^\infty(\R^d)\times L^1(\R^d)$ to $L^{1,\infty}(\R^d)$, the weak $L^1$ space. The study of this topic in this case is quite related to weak (1,1) bound of rough singular integral (see \eg \cite{Chr88}, \cite{See96}).

\emph{Case $r<{d}/(d+1)$.}C. P. Calder\'on \cite{CCal79} gave an example shows that if $r<{d}/(d+1)$, $q\geq1$ and $p\geq1$, $\mathcal{C}[\nabla A,f](x)$ is unbounded on a ball in $\R^d$ for some functions $f,A$ satisfying $f\in L^p(\R^d)$ and $\nabla A\in L^q(\R^d)$.

\emph{Case ${d}/(d+1)\leq r<1$.}In the same paper \cite{CCal79}, C. P. Calder\'on proved that if $\fr{1}{r}=\fr{1}{q}+\fr{1}{p}$ with $\fr{d}{d+1}\leq r<1$, $1\leq q<d$, $1< p\leq\infty$, $\mathcal{C}[\nabla A,f]$ maps $L^q(\R^d)\times L^p(\R^d)$ to $L^{r,\infty}(\R^d)$ when $\Om$ satisfies the condition \eqref{e:11Omho} and either the property (p-i) or (p-ii). Specially in this case if $1<q<d$,  C. P. Calder\'on pointed out that $L^{r,\infty}(\R^d)$ space can be replaced by $L^{r}(\R^d)$ by using the interpolation theorem developed by himself in \cite{CCal75}.
If ${d}/(d+1)\leq r<1$, $q>d$, $p\geq1$, C. P. Calder\'on got the following result.
\begin{thma}[see Theorem D in \cite{CCal75}]
Suppose that $\Om$ satisfies \eqref{e:11Omho}, \eqref{e:11Omcan}, $\Om\in L^1(\mathbf{S}^{d-1})$ and the H\"ormander condition
\Be\label{e:11reg}
\int_{|x|\geq2|y|}\Big|\fr{\Om(x-y)}{|x-y|^d}-\fr{\Om(y)}{|y|^d}\Big|dy<+\infty.
\Ee
Then $\mathcal{C}[\nabla A,f](x)$ is bounded from $L^q(\R^d)\times L^1{(\R^d)}$ to $L^{r,\infty}(\R^d)$ where $\fr{1}{r}=\fr{1}{q}+1$ with $q>d$. Moreover we have $\mathcal{C}[\nabla A,f](x)$ is bounded from $L^q(\R^d)\times L^p(\R^d)$ to $L^r(\R^d)$ where $\fr{1}{r}=\fr{1}{q}+\fr{1}{p}$ with $q>d$ and $p>1$.
\end{thma}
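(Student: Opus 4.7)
The plan is to first prove the weak endpoint $\|\mathcal{C}[\nabla A,f]\|_{L^{r,\infty}}\leq C\|\nabla A\|_q\|f\|_1$ with $r=q/(q+1)$; the strong bound $L^q\times L^p\to L^r$ for $1<p<\infty$ then follows by Marcinkiewicz interpolation against a strong $L^q\times L^\infty\to L^q$ bound, which under the hypotheses of Theorem A is available by standard Calder\'on--Zygmund theory applied to the linearized operator in $f$ with $\nabla A\in L^\infty$ fixed. Normalize $\|\nabla A\|_q=\|f\|_1=1$ and fix $\alpha>0$.

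Since $q>d$, Morrey's inequality (after adjusting $A$ by a constant) gives $|A(x)-A(y)|\leq C|x-y|^{1-d/q}$. Hence the kernel
\[
K(x,y)=\frac{\Omega(x-y)(A(x)-A(y))}{|x-y|^{d+1}}
\]
obeys the fractional size bound $|K(x,y)|\leq C|\Omega(x-y)|\,|x-y|^{-d-d/q}$, and combining this H\"older bound with the H\"ormander assumption \eqref{e:11reg} on $\Omega(\cdot)/|\cdot|^d$ yields the fractional regularity estimate
\[
\int_{|x-c|\geq 2|y-c|}|K(x,y)-K(x,c)|\,dx\leq C|y-c|^{-d/q}.
\]
I would derive this by splitting $K(x,y)-K(x,c)$ into a piece where only $\Omega(\cdot)/|\cdot|^d$ is differenced (controlled by H\"ormander with the weight $|x-y|^{-d/q}$ from the H\"older bound on $A$) and a piece where only $(A(x)-A(\cdot))/|x-\cdot|$ is differenced (controlled directly against $|\Omega(x-c)|/|x-c|^d$); in both, the gain $|y-c|^{-d/q}$ emerges uniformly.

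Perform a Calder\'on--Zygmund decomposition of $f$ at the critical height $\lambda=\alpha^r$, giving $f=g+\sum_j b_j$ with $\|g\|_\infty,\|b_j\|_\infty\leq C\lambda$, $\supp b_j\subset Q_j$ of side $\delta_j$, $\int b_j=0$, and $\sum|Q_j|\leq C\lambda^{-1}$. The good part is handled by the strong $L^q\times L^\infty\to L^q$ bound: $\|\mathcal{C}[\nabla A,g]\|_q\leq C\lambda$, whence $|\{|\mathcal{C}[\nabla A,g]|>\alpha/2\}|\leq C(\lambda/\alpha)^q=C\alpha^{-r}$ by our choice $\lambda=\alpha^r$ and $r=q/(q+1)$. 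The enlarged bad set $\bigcup 2Q_j$ has measure $\leq C\lambda^{-1}=C\alpha^{-r}$.

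The main obstacle is estimating $\mathcal{C}[\nabla A,b]$ on $\R^d\setminus\bigcup 2Q_j$. Using mean-zero of $b_j$, $\mathcal{C}[\nabla A,b_j](x)=\int_{Q_j}(K(x,y)-K(x,c_j))b_j(y)\,dy$, and the regularity estimate yields $\int_{(2Q_j)^c}|\mathcal{C}[\nabla A,b_j]|\,dx\leq C\delta_j^{-d/q}\|b_j\|_1$. Because $r<1$, this $L^1$ control together with Chebyshev cannot by itself close the weak-$L^r$ estimate; a weak-$L^r$ bound per cube that sums correctly is required. To achieve this I would dyadically decompose $(2Q_j)^c$ into annular shells $S_k=\{x:2^k\delta_j\leq|x-c_j|<2^{k+1}\delta_j\}$, bound the level set $|\{x\in S_k:|\mathcal{C}[\nabla A,b_j](x)|>\tau\}|$ by the minimum of the shell volume $|S_k|\sim 2^{kd}\delta_j^d$ and the Chebyshev bound $\tau^{-1}\int_{S_k}|\mathcal{C}[\nabla A,b_j]|\,dx$, and integrate against $r\tau^{r-1}\,d\tau$. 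The shell-wise estimates for the kernel increment---size contributing a $2^{-kd/q}$ gain and regularity contributing the H\"ormander shell masses, whose total is finite---conspire to give exactly the critical exponent $r=q/(q+1)$ in the weak-$L^r$ norm per cube. Summing in $j$ via the $r$-power subadditivity $\|\sum f_j\|_{L^{r,\infty}}^r\leq C_r\sum\|f_j\|_{L^{r,\infty}}^r$ (valid for $r<1$) together with the CZ constraint $\sum\|b_j\|_1\leq C$ then completes the weak endpoint, and the strong $L^q\times L^p\to L^r$ bound follows by real interpolation.
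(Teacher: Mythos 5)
Your overall architecture (Calder\'on--Zygmund decomposition of $f$ at height $\lambda=\alpha^r$, H\"ormander-type regularity of the full kernel via the Morrey/H\"older bound on $A$) is in the spirit of C.\,P.~Calder\'on's original argument that the paper merely cites (Theorem D of \cite{CCal75}); the paper itself never proves Theorem A, and its own proof of the stronger Theorem \ref{t:11}(i) goes a different way (exceptional set where the Mary Weiss maximal function $\mathcal{M}(\nabla A)$ exceeds $\lambda^{r/q}$, Whitney decomposition, Lipschitz extension of $A$, and the weak $(1,1)$ bound of Lemma \ref{l:11weak}). But your proposal has a genuine gap at the decisive step, the summation over the bad cubes. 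Because $r=q/(q+1)$ is exactly critical, the per-cube weak-$L^r$ bound that your shell optimization can deliver is of size $\|b_j\|_{L^1}^r$: as you yourself note, the powers of $\delta_j$ ``conspire'' to cancel, so no factor of $|Q_j|$ or $\delta_j$ survives to help. After the $r$-power subadditivity of $\|\cdot\|_{L^{r,\infty}}^r$ you therefore need $\sum_j\|b_j\|_{L^1}^r\le C$, and this does \emph{not} follow from the CZ constraint $\sum_j\|b_j\|_{L^1}\le C$ when $r<1$: with $N$ equal cubes, $\|b_j\|_{L^1}\approx\lambda|Q_j|$ and $\sum_j|Q_j|\approx\lambda^{-1}$, one gets $\sum_j\|b_j\|_{L^1}^r\approx N^{1-r}\to\infty$. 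So the last sentence of your argument is a non sequitur; sub-$L^1$ weak estimates of this kind must keep the sum over $j$ together (e.g.\ dominate $\sum_j|\mathcal{C}[\nabla A,b_j]|$ pointwise off $\bigcup 2Q_j$ by explicit tails and integrate a $p$-th power with a suitable $p$, exploiting $\sum_j|Q_j|\lesssim\lambda^{-1}$), rather than add up per-cube quasi-norms proportional to $\|b_j\|_{L^1}^r$.

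Two further points. First, even within a single cube the piece of $K(x,y)-K(x,c_j)$ that is controlled only by the H\"ormander hypothesis resists the critical exponent: shell-wise your scheme needs $\sum_k\omega_k^r<\infty$, where $\omega_k$ is the shell mass of the H\"ormander integral, and $\sum_k\omega_k<\infty$ does not imply $\sum_k\omega_k^r<\infty$ for $r<1$; your argument closes only for exponents strictly above $r$, so the endpoint needs an extra idea. Second, the good-function ingredient $\|\mathcal{C}[\nabla A,g]\|_{L^q}\lesssim\|\nabla A\|_{L^q}\|g\|_{L^\infty}$ is not ``standard Calder\'on--Zygmund theory applied to the linearized operator in $f$ with $\nabla A\in L^\infty$ fixed'': freezing a Lipschitz $A$ gives boundedness in $f$, i.e.\ the case $q=\infty$, whereas what you need is boundedness in $A$ with $f\in L^\infty$ frozen, which is itself a nontrivial theorem of C.\,P.~Calder\'on (the $p=\infty$, $r=q$ case recalled in the introduction) and is proved there under the hypotheses (p-i)/(p-ii), not under the hypotheses of Theorem A; as written this step is either circular or unjustified. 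By contrast, the paper's route through $\mathcal{M}(\nabla A)$ and Lemma \ref{l:11weak} avoids both difficulties, which is precisely why it extends to rough $\Omega\in L\log^+L$ without any H\"ormander condition.
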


Based on the previous theory of rough singular integral, now a natural question is that whether the conclusions in Theorem A hold if $\Om$ is a rough kernel. Also notice that there is a case $r=d/(d+1)$, $p=1$ and $q=d$ which is not developed even if the kernel satisfies the H\"ormander condition \eqref{e:11reg}. In this case is it possible to establish some kind of estimate like \eqref{e:11bc} or weak type estimate? Well, the present paper will give confirm answers to those questions. Our main results are as follows.
\begin{theorem}\label{t:11}
Let $\mathcal{C}[\cdot,\cdot]$ be defined in \eqref{e:11com}. Suppose $\Om$ satisfies $(\ref{e:11Omho}), (\ref{e:11Omcan})$
and $\Om\in L\log^+L(\mathbf{S}^{d-1})$ for $d\geq2$. Then we have the following conclusions:

{\rm(i).} For any $\lambda>0$, there exists a finite constant $C_{\Om,d}>0$ such that
\[ \lam^r|\{x\in \mathbb{R}^d:|\mathcal{C}[\nabla A,f](x)|>\lambda\}|\leq C_{\Om,d}\|{\nabla A}\|^r_{L^q(\R^d)}\|f\|^r_{L^1(\R^d)}, \]
where $\fr{1}{r}=\fr{1}{q}+1$ and $q>d$.

{\rm(ii).} Let $\fr{1}{r}=\fr{1}{d}+1$. Then for any $\lambda>0$, there exists a finite constant $C_{\Om,d}$ such that
\[ \lam^r|\{x\in \mathbb{R}^d:|\mathcal{C}[\nabla A,f](x)|>\lambda\}|\leq C_{\Om,d}\|{\nabla A}\|^r_{L^{d,1}(\R^d)}\|f\|^r_{L^1(\R^d)},\]
here $L^{d,1}(\R^d)$ is the standard Lorentz space (see \cite{Ste71}).

Combining these results of A. P. Calder\'on \cite{Cal65}, C. P. Calder\'on \cite{CCal75,CCal79}, Y. Ding and the author \cite{DL15a}, we may conclude all possible $({1}/{p},{1}/{q})$ in the following figure:
\begin{figure}[!h]
\centering
\includegraphics[height=0.38\textwidth]{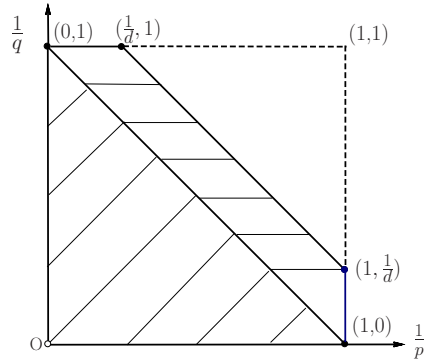}
\caption{\small{Our main results in Theorem \ref{t:11}  are corresponding to the case $0<\fr{1}{q}\leq \fr{1}{d}$ and $p=1$, see the blue line and point.}}
\end{figure}

\end{theorem}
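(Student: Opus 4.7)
The plan is to establish both weak-type bounds via a Calder\'on--Zygmund decomposition of $f$ combined with a new endpoint estimate for a Mary Weiss-type maximal function adapted to $\nabla A$; I outline the argument for part (i), with part (ii) obtained by substituting the $L^{d,1}$ version of the maximal-function estimate at the analogous step. By homogeneity, normalize $\|\nabla A\|_{L^q}=\|f\|_{L^1}=1$; it suffices to show $|\{|\mathcal{C}[\nabla A,f]|>\lam\}|\lc \lam^{-r}$. Apply a Calder\'on--Zygmund decomposition of $f$ at height $\alp\sim\lam^r$, obtaining $f=g+\sum_Q b_Q$ with $\|g\|_\infty\lc\alp$, $\int b_Q=0$, $\supp b_Q\seq Q$, and $\sum_Q|Q|\lc \alp^{-1}=\lam^{-r}$. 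For the good part, the strong bilinear estimate $\mathcal{C}[\nabla A,\cdot]:L^q\times L^{p_0}\to L^{r_0}$, with $1/r_0=1/q+1/p_0$ and any $p_0>1$, established in A.~P.~Calder\'on \cite{Cal65}, together with $\|g\|_{L^{p_0}}\lc\alp^{1-1/p_0}$ and Chebyshev's inequality, yields $|\{|\mathcal{C}[\nabla A,g]|>\lam/2\}|\lc\lam^{-r}$. Since the enlarged cubes $\bigcup_Q Q^*$ have total measure $\lc\lam^{-r}$, the task reduces to bounding $\mathcal{C}[\nabla A,b](x)$ on $\bigl(\bigcup_Q Q^*\bigr)^c$.

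Using $\int b_Q=0$, on $\bigl(\bigcup_Q Q^*\bigr)^c$ I would rewrite
\Bes
\mathcal{C}[\nabla A,b_Q](x)=\int_Q \bigl[K_A(x,y)-K_A(x,c_Q)\bigr]\, b_Q(y)\,dy,
\Ees
where $K_A(x,y)=\Om(x-y)(A(x)-A(y))/|x-y|^{d+1}$ and $c_Q$ is the center of $Q$. Decomposing $A(x)-A(y)=\bigl(A(x)-A(c_Q)\bigr)+\bigl(A(c_Q)-A(y)\bigr)$ separates a ``Morrey-controlled'' term, multiplied by $A(x)-A(c_Q)$, which for $q>d$ is H\"older-bounded by $\|\nabla A\|_{L^q}\,|x-c_Q|^{1-d/q}$ via Morrey embedding (in part (ii) with $q=d$ one uses instead the critical embedding $\nabla A\in L^{d,1}\Rightarrow A\in C^0$), from a ``Mary Weiss'' term whose multiplier $A(c_Q)-A(y)$ encodes the affine defect of $A$ across $Q$. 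Both contributions are expanded via a Christ--Seeger-type angular/dyadic decomposition of $\Om$ that is summable under the $L\log^+L$ hypothesis, and the resulting expressions are dominated pointwise by suitable Hardy--Littlewood-type maximal functions of $f$ together with a Mary Weiss-type maximal function $\mathcal{M}^{\#}_A$ of $\nabla A$.

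The main obstacle, and the chief technical novelty of the paper, is the new endpoint estimate for $\mathcal{M}^{\#}_A$, which should take the form $\lam^r|\{\mathcal{M}^{\#}_A f>\lam\}|\lc \|\nabla A\|_{L^q}^r\|f\|_{L^1}^r$ for $q>d$ (with $L^q$ replaced by $L^{d,1}$ in part (ii)). The difficulty is that the logarithmic loss from the angular decomposition of $\Om$ must be compensated by a delicate scale-by-scale level-set analysis that exploits the cancellation in the Taylor-type defect $A(x)-A(y)-\nabla A(x)\cdot(x-y)$; the endpoint regime forces one to work in Lorentz scales (especially $L^{d,1}$ when $q=d$), where the standard interpolation arguments of C.~P.~Calder\'on \cite{CCal75} break down and must be replaced by a direct real-variable argument. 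Once this maximal inequality is in hand, summing the good-part, near-cube, and far-from-cube contributions yields $|\{|\mathcal{C}[\nabla A,f]|>\lam\}|\lc\lam^{-r}$ in both cases, completing the proof.
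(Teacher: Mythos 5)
There is a genuine gap: your argument is built on the classical Calder\'on--Zygmund decomposition of $f$, and the step you leave vague is exactly the step that the rough kernel obstructs. In the bad-part estimate $\int_Q[K_A(x,y)-K_A(x,c_Q)]b_Q(y)dy$, summing over cubes requires some smoothness of $\Om$ --- this is precisely why C.~P.~Calder\'on's Theorem A needs the H\"ormander condition \eqref{e:11reg}, and removing that hypothesis is the whole point of the theorem. With only $\Om\in L\log^+L(\S^{d-1})$ there is no kernel regularity to exploit, and your claim that a Christ--Seeger-type angular/dyadic decomposition of $\Om$ lets you dominate the bad part pointwise by Hardy--Littlewood and Mary Weiss-type maximal functions is unsupported: weak-type bounds for rough singular integrals (Christ, Seeger, Ding--Lai) are not obtained by pointwise domination but by $L^2$/orthogonality arguments, and the ``new endpoint estimate'' you then posit, $\lam^r|\{\mathcal{M}^{\#}_Af>\lam\}|\lc\|\nabla A\|^r_{L^q}\|f\|^r_{L^1}$, is essentially the theorem itself restated for a dominating bilinear maximal operator and is nowhere proved. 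In addition, your part (ii) plan fails quantitatively at the endpoint: the Morrey bound $|A(x)-A(c_Q)|\lc\|\nabla A\|_{L^q}|x-c_Q|^{1-d/q}$ degenerates when $q=d$, and ``$\nabla A\in L^{d,1}\Rightarrow A\in C^0$'' gives boundedness but no decay in $|x-c_Q|$, so the far-field sum over Whitney/CZ cubes has no gain there.

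For comparison, the paper does not decompose $f$ at all in the CZ sense. It forms an exceptional set from $A$: with $\M(\nabla A)(x)=\sup_{h\neq0}|A(x+h)-A(x)|/|h|$ (the Mary Weiss maximal function, acting on $A$ alone, not a bilinear object), one sets $J_\lam=\{\M(\nabla A)>\lam^{r/q}\}$, whose measure is controlled by the $L^q$ bound of $\M$ for $q>d$ (Lemma \ref{l:mw}) or, for part (ii), by the genuinely new estimate $\M:L^{d,1}\to L^{d,\infty}$ (Lemma \ref{l:11md}), proved by writing $A=K*f$ with $K(x)=|x|^{-(d-1)}$ and using rearrangement/Lorentz duality. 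Outside an open neighborhood $G_\lam$ of $J_\lam$ the function $A$ is Lipschitz with constant $\lam^{r/q}$; one extends it to a Lipschitz $\tilde A$ on $\R^d$, splits $f=f\chi_{(G_\lam)^c}+f\chi_{G_\lam}$, and invokes the Ding--Lai weak (1,1) bound for $\mathcal{C}[\nabla\tilde A,\cdot]$ with Lipschitz $\tilde A$ and $\Om\in L\log^+L$ (Lemma \ref{l:11weak}) --- this is where all the rough-kernel difficulty is absorbed, as a cited external result --- while the error terms coming from $\tilde A-A$ on the Whitney cubes of $G_\lam$ need only $\|\Om\|_{L^1(\S^{d-1})}$, Chebyshev, Fubini, and the geometry $|x-y|\gc l(Q_k)\approx|y-y_k|$. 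If you want to salvage your outline, you would either have to reprove a rough-kernel weak (1,1) theorem inside the bad-part analysis or restructure the proof around the exceptional set for $\M(\nabla A)$ as above.
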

\begin{remark}
Here we should point that the strong estimate in Theorem A (i.e $\mathcal{C}[\nabla A,f]$ maps $L^q(\R^d)\times L^p(\R^d)$ to $L^r(\R^d)$ where $\fr{1}{r}=\fr{1}{q}+\fr{1}{p}$ with $q>d$ and $p>1$) just follows from the multilinear interpolation theorem (see \cite{Gra250}, Theorem 7.2.2). The proof is standard. So we do not include this result in our main theorem.
\end{remark}
\begin{remark}
In (i) of Theorem \ref{t:11}, we only require $\Om\in L\log^+L(\mathbf{S}^{d-1})$ which is strictly weaker than the regularity condition \eqref{e:11reg}. Hence we improve Theorem A essentially. To the best knowledge of the author, the estimate in (ii) of Theorem \ref{t:11} is new even when the kernel is smooth.
\end{remark}

In \cite{CCal75}, C. P. Calder\'on used the method by making a Calder\'on-Zygmund decomposition of $f$ so that $f$ can be written as a good function and a bad function. It is easy to deal with the good function. However when estimating those terms related to the bad function, the H\"ormander condition \eqref{e:11reg} is crucial.
In this paper, the method here is different and it relies on our recent results \cite{DL15a} that $\mathcal{C}[\nabla A,f](x)$ is weak type (1,1) bounded if $A$ is a Lipschitz function and $\Om\in L\log^+L(\mathbf{S}^{d-1})$ (see Lemma \ref{l:11weak} below).
Roughly speaking, for a function $A$ satisfying $\nabla A\in L^q(\R^d)$, we will construct an {\it except set\/} which satisfies required weak type estimate. And on the complementary of {\it except set\/} the function $A$ is a Lipschitz function. Therefore the weak type (1,1) boundedness of $\mathcal{C}[\nabla A,f](x)$ could be applied there.

Notice that the estimate $\|\nabla A\|_{L^q(\R^d)}$ is related to the Sobolev space $W^{1,q}(\R^d)$. When $1\leq q<d$, Sobolev space $W^{1,q}(\R^d)$ is embedded into $L^{q*}(\R^d)$ with $q^*=dq/(d-q)$. This property is crucial in \cite{CCal79}. When $q>d$, {\it except set\/} can be constructed by using the Mary Weiss maximal operator $\M$ (see section \ref{s:112} for its definition), which maps $L^q(\R^d)$ to $L^q(\R^d)$ (or $L^{q,\infty}(\R^d)$) only when $q>d$. But when $q=d$,  Sobolev $W^{1,d}(\R^d)$ may be imbedded into an Orlicz space (see \cite{AF03}) which may be not useful to us.
This forces us to study the Mary Weiss maximal operator on $L^d(\R^d)$, which is quite difficult. Fortunately, we find a substitute that $\M$ maps the Lorentz space $L^{d,1}(\R^d)$ to $L^{d,\infty}(\R^d)$ which is enough to construct \emph{except set.}

Throughout this paper, we only consider the dimension $d\ge2$ and the letter $C$ stands for a positive finite constant which is independent of the essential variables and not necessarily the same one in each occurrence. $A\lc B$ means $A\leq CB$ for some constant $C$. Sometimes we write $C_\eps$ means that it depends on the parameter $\eps$. $A\approx B$ means that $A\lc B$ and $B\lc A$.
For a set $E\subset\R^d$, we denote by  $|E|$ or $m(E)$ the Lebesgue measure of $E$. $\nabla A$ will stand for the vector $(\pari_1A,\cdots,\pari_dA)$ where $\pari_i A(x)=\pari A(x)/\pari x_i$.
Define $$\|\nabla A\|_{X}=\Big\|\Big(\sum_{i=1}^d|\pari_iA|^2\Big)^{\fr{1}{2}}\Big\|_{X}$$
for $X=L^p(\R^d)$ or $X=L^{d,1}(\R^d)$.
\vskip1cm

\section {Proof of Theorem \ref{t:11}}\label{s:112}

\subsection{Some Lemmas}\quad
\vskip0.2cm
Before giving the proof of Theorem \ref{t:11}, we introduce some lemmas which play a key role in the proof of Theorem \ref{t:11}.
For those readers who are not familiar with the theory of the Lorentz space $L^{p,q}(\R^d)$, we refer to see \cite{Ste71}, Chapter V.3. We will use the theory of the Lorentz space $L^{p,q}(\R^d)$ in Lemma \ref{l:11md}.
Now we begin by some properties of a special maximal function which was introduced by Mary Weiss (see \cite{CCal75}). It is defined as
$$\M(\nabla A)(x)=\sup_{h\in\R^d\setminus\{0\}}\fr{|A(x+h)-A(x)|}{|h|}.$$
\begin{lemma}\label{l:mw}
Let $\nabla A\in L^p(\R^d)$ with $p>d$. Then $\M$ is bounded on $L^p(\R^d)$, that is
$$\|\M(\nabla A)\|_{L^p(\R^d)}\leq C\|\nabla A\|_{L^p(\R^d)},$$
where the constant $C$ is independent of $A$.
\end{lemma}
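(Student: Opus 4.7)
The plan is to dominate $\M(\nabla A)$ pointwise by $(M(|\nabla A|^r))^{1/r}$ for a suitably chosen auxiliary exponent $r$, where $M$ denotes the Hardy--Littlewood maximal function, and then to exploit the strong $L^{p/r}$ boundedness of $M$.

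Since $p>d$, I would first fix an auxiliary exponent $r$ with $d<r<p$. Next I invoke the standard pointwise Morrey inequality: for $r>d$, for a.e.\ $x\in\R^d$ and every $h\in\R^d\setminus\{0\}$, one has
\begin{equation*}
|A(x+h)-A(x)|\lc |h|^{1-d/r}\left(\int_{B(x,2|h|)}|\nabla A(y)|^{r}\,dy\right)^{1/r}.
\end{equation*}
Dividing by $|h|$ and rewriting the right-hand side as a constant times the integral average over the ball of radius $2|h|$ centered at $x$ gives
\begin{equation*}
\fr{|A(x+h)-A(x)|}{|h|}\lc \bigl(M(|\nabla A|^{r})(x)\bigr)^{1/r},
\end{equation*}
with an implicit constant depending only on $d$ and $r$. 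Since the right-hand side is independent of $h$, taking the supremum over $h$ yields the pointwise bound $\M(\nabla A)(x)\lc(M(|\nabla A|^{r})(x))^{1/r}$.

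To conclude, I would raise this bound to the $p$-th power and integrate. Because $p/r>1$, the Hardy--Littlewood maximal operator is strongly bounded on $L^{p/r}(\R^d)$, giving
\begin{equation*}
\int_{\R^d}\M(\nabla A)^{p}\,dx\lc\int_{\R^d}\bigl(M(|\nabla A|^{r})\bigr)^{p/r}dx\lc\int_{\R^d}|\nabla A|^{p}\,dx,
\end{equation*}
which is precisely the asserted $L^p$ estimate for $\M$.

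The only non-routine ingredient is the pointwise Morrey estimate above. This is classical: one may assume $A$ is smooth by a density argument, express $A(x+h)-A(x)$ via averages of radial line integrals of $\nabla A$ over a ball of radius comparable to $|h|$, and apply H\"older's inequality using $r>d$ (which is exactly the condition that makes the relevant Riesz-type kernel locally integrable to the dual exponent), then transfer the estimate back to general $A$ with $\nabla A\in L^p(\R^d)$. I expect no serious obstacle in this step, as it is a textbook computation from Sobolev embedding theory, and the freedom to take $r$ strictly between $d$ and $p$ is precisely what makes the argument work.
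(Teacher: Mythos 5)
Your proposal is correct and is essentially the paper's own argument: the paper reduces to smooth compactly supported $A$, invokes the same pointwise Morrey-type bound $\fr{|A(x)-A(y)|}{|x-y|}\lc\big(\fr{1}{|x-y|^d}\int_{|x-z|\leq 2|x-y|}|\nabla A(z)|^{q}dz\big)^{1/q}$ for an exponent $q>d$ (citing Lemma 1.4 of C.~P.~Calder\'on), and concludes via the strong $(p/q,p/q)$ boundedness of the Hardy--Littlewood maximal operator, exactly as you do with your auxiliary exponent $r\in(d,p)$.
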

\begin{proof}
By using a standard limiting argument, we only need to consider $A$ as a $C^\infty$ function with compact support. Then the lemma just follows from the inequality
$$\fr{|A(x)-A(y)|}{|x-y|}\lc\Big(\fr{1}{|x-y|^d}\int_{|x-z|\leq 2|x-y|}|\nabla A(z)|^{q}dz\Big)^{\fr{1}{q}},$$
which holds for any $q>d$ (see Lemma 1.4 in \cite{CCal75}) and the fact that the Hardy Littlewood maximal operator is of strong $(p,p)$ for $p>1$.
\end{proof}
\begin{lemma}\label{l:11md}
Let $\nabla A\in L^{d,1}(\R^d)$. Then for any $\lam>0$, there exist a finite constant $C$ independent of $A$ such that
$$\lam^d|\{x\in\R^d:\M(\nabla A)(x)>\lam\}|\leq C\|\nabla A\|^d_{L^{d,1}(\R^d)}.$$
\end{lemma}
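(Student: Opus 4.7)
The plan is to majorize $\M(\nabla A)$ pointwise by a local $L^{d,1}$-averaged maximal operator and then prove the endpoint bound for that auxiliary operator by a layer-cake argument. Define
\[
Ng(x):=\sup_{r>0}\frac{\|g\|_{L^{d,1}(B(x,r))}}{r}.
\]
The argument splits into two steps: (a) a Sobolev--Lorentz pointwise inequality $\M(\nabla A)(x)\lc N(\nabla A)(x)$, and (b) the operator bound $\|Ng\|_{L^{d,\infty}}\lc\|g\|_{L^{d,1}}$.

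For step (a), I would start from the classical mean-oscillation estimate $|A(x)-A_{B(x,r)}|\lc\int_{B(x,r)}|\nabla A(z)|\,|x-z|^{1-d}\,dz$, apply H\"older's inequality in Lorentz spaces with the conjugate pair $(L^{d,1},L^{d',\infty})$ where $d'=d/(d-1)$, and note that a direct distribution-function computation yields $\||x-z|^{1-d}\chi_{B(x,r)}\|_{L^{d',\infty}}\le c_d$ uniformly in $r$. This gives $|A(x)-A_{B(x,r)}|\lc\|\nabla A\|_{L^{d,1}(B(x,r))}$, and the same estimate applied at $y$ (with a comparable ball containing both $x$ and $y$) together with the triangle inequality yields $|A(x)-A(y)|\lc\|\nabla A\|_{L^{d,1}(B(x,2|x-y|))}$. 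Dividing by $|x-y|$ and taking $\sup_{y\ne x}$ gives (a).

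For step (b), I would first verify the restricted weak-type bound on characteristic functions: since $\|\chi_E\|_{L^{d,1}(B(x,r))}=d|E\cap B(x,r)|^{1/d}$, one has $N(\chi_E)(x)\approx (M\chi_E(x))^{1/d}$, where $M$ is the Hardy--Littlewood maximal operator, and the weak-$(1,1)$ bound for $M$ yields $\lambda^d|\{N\chi_E>\lambda\}|\lc|E|\approx\|\chi_E\|_{L^{d,1}}^d$. To extend to arbitrary $g\ge0$, use the layer-cake decomposition $g=\int_0^\infty\chi_{\{g>t\}}\,dt$: Minkowski's inequality for the (Banach) norm $\|\cdot\|_{L^{d,1}(B(x,r))}$ followed by pushing $\sup_r$ inside the nonnegative integrand gives $Ng(x)\le\int_0^\infty N(\chi_{\{g>t\}})(x)\,dt$. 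Since $d\ge2$, $L^{d,\infty}$ is equivalent to a Banach norm, so Minkowski's integral inequality applies and produces
\[
\|Ng\|_{L^{d,\infty}}\lc\int_0^\infty|\{g>t\}|^{1/d}\,dt\approx\|g\|_{L^{d,1}},
\]
which combined with (a) concludes the proof.

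The main obstacle is locating the correct intermediate operator $N$. The na\"{\i}ve Calder\'on-type pointwise bound $\M(\nabla A)\lc M_q(|\nabla A|):=(M|\nabla A|^q)^{1/q}$ used in Lemma~\ref{l:mw} is too crude at the endpoint: for $g=\chi_E$ a quick calculation shows $\lambda^d|\{M_q\chi_E>\lambda\}|\approx\lambda^{d-q}|E|$, which blows up as $\lambda\to 0^+$ for any $q>d$, so $M_q$ itself does \emph{not} map $L^{d,1}$ into $L^{d,\infty}$. The Sobolev--Lorentz pointwise bound with an $L^{d,1}$-average in place of an $L^q$-average supplies precisely the right scaling, after which the endpoint reduces to the classical weak-$(1,1)$ property of $M$.
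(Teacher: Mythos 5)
Your proof is correct, and it takes a genuinely different route from the paper on the main step, even though the key auxiliary operator you call $N$ coincides (up to constants) with the operator $\Lam(f)(x)=\sup_{r>0}\|f\chi_{B(x,r)}\|_{L^{d,1}(\R^d)}/\|\chi_{B(x,r)}\|_{L^{d,1}(\R^d)}$ that the paper also isolates. For the pointwise domination, the paper first writes $A=K*f$ with $K(x)=|x|^{1-d}$ and $f=c_d\sum_j R_j(\pari_j A)$ (so it needs boundedness of the Riesz transforms on $L^{d,1}(\R^d)$), then splits $A(x+h)-A(x)$ into a near part controlled by $\Lam$ and the Hardy--Littlewood maximal function and a far part handled by a Taylor expansion and the maximal Riesz transforms $R_j^*$; you instead apply the local subrepresentation inequality $|A(x)-A_B|\lc\int_B|\nabla A(z)|\,|x-z|^{1-d}dz$ on a single ball containing both points, combined with Lorentz--H\"older against $\||x-\cdot|^{1-d}\chi_B\|_{L^{d',\infty}}\le c_d$, which bounds $\M(\nabla A)$ by $N(\nabla A)$ directly in terms of $\nabla A$ and dispenses with the convolution representation, the $L^{d,1}$ bound for Riesz transforms, and the far-field analysis altogether --- a shorter and more elementary argument. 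For the boundedness of the common operator, the paper reduces to characteristic functions and cites Grafakos (Lemma 1.4.20) for the passage from restricted weak type to $L^{d,1}\to L^{d,\infty}$, whereas you prove that passage by hand via the layer-cake decomposition and Minkowski's integral inequality in the normable space $L^{d,\infty}$ ($d\ge2$); both are sound, yours being self-contained (and in fact your first Minkowski step is an exact identity, since $\|g\|_{L^{d,1}}=d\int_0^\infty|\{|g|>t\}|^{1/d}dt$). The only point to tidy, which the paper also glosses by a standard limiting argument, is to reduce first to smooth, compactly supported $A$ so that pointwise values and the mean-oscillation estimate are legitimate; your closing observation that $M_q$ with $q>d$ fails the $L^{d,1}\to L^{d,\infty}$ bound correctly explains why this finer operator is needed at the endpoint.
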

\begin{proof}
It suffices to consider $A$ as a smooth function with compact support. By the formula given in \cite{Ste70}, page 125, (17), we may write
\Bes
A(x)=C_n\sum_{i=1}^d\int_{\R^d}\fr{x_i-y_i}{|x-y|^d}\pari_i A(y)dy=K*f(x)
\Ees
where $K(x)=1/|x|^{d-1}$, $f=C_n\sum_{j=1}^dR_j(\pari_j A)$ with $R_j$ the Riesz transforms. By using the fact the Riesz transform $R_j$ maps $L^{d,1}(\R^d)$ to itself which follows from the general form of the Marcinkiewicz interpolation theorem (see \cite{Ste71}, Theorem 3.15 in page 197), one can easily get that
$$\|f\|_{L^{d,1}(\R^d)}\lc\|\nabla A\|_{L^{d,1}(\R^d)}.$$
Hence to prove the lemma, it is enough to show that
\Be\label{e:11weissmaximal}
\lam^d|\{x\in\R^d:\M(\nabla A)(x)>\lam\}|\lc\|f\|^d_{L^{d,1}(\R^d)}
\Ee
with $A=K*f$. In the following our goal is to prove that for any $x\in\R^d$, the estimate
$$|A(x+h)-A(x)|\lc |h|T(f)(x)$$
holds uniformly for $h\in\R^d\setminus\{0\}$ with $T$ an operator maps $L^{d,1}(\R^d)$ to $L^{d,\infty}(\R^d)$. Once we prove this, we get \eqref{e:11weissmaximal} and hence complete the proof of Lemma \ref{l:11md}. We write
\Bes
\begin{split}
A(&x+h)-A(x)\\
&=\int_{|x-y|\leq2|h|}|x+h-y|^{-d+1}f(y)dy-\int_{|x-y|\leq2|h|}|x-y|^{-d+1}f(y)dy\\
&\ \ \ \ +\int_{|x-y|>2|h|}\Big(|x+h-y|^{-d+1}-|x-y|^{-d+1}\Big)f(y)dy\\
&=I+II+III.
\end{split}
\Ees

Let us first consider $I$. By an elementary calculation, one may get $K\in L^{d',\infty}(\R^d)$ where $d'=d/(d-1)$. Set $B(x,r)=\{y\in\R^d:|x-y|\leq r\}$. Using the rearrangement inequality (see \cite{Gra249}, page 74, Exercise 1.4.1), we have
\Bes
\begin{split}
|I|&\leq\int_{\R^d}K(x+h-y)(f\chi_{B(x,2|h|)})(y)dy\leq\int_0^\infty K^*(s)(f\chi_{B(x,2|h|)})^*(s)ds\\
&\leq\Big(\int_0^\infty (f\chi_{B(x,2|h|)})^*(s)s^{\fr{1}{d}}\fr{ds}{s}\Big)\cdot\sup_{s>0}\Big(K^*(s)s^{\fr{1}{d'}}\Big)\\
&\lc\|f\chi_{B(x,2|h|)}\|_{L^{d,1}(\R^d)}\|K\|_{L^{d',\infty}(\R^d)},
\end{split}
\Ees
here $f^*$ represents the decreasing rearrangement of $f$.
By an elementary calculation, one may get $\|\chi_E\|_{L^{d,1}(\R^d)}=\|\chi_E\|_{L^d(\R^d)}$ holds for any characteristic function $\chi_E$ of set $E$ of finite Lebesgue measure, thus $\|\chi_{B(x,2|h|)}\|_{L^{d,1}(\R^d)}=C_d|h|$. Therefore we get
$$|I|\lc |h|\Lam(f)(x),\ \ \text{where}\ \ \Lam(f)(x)=\sup_{r>0}\fr{\|f\chi_{B(x,r)}\|_{L^{d,1}(\R^d)}}{\|\chi_{B(x,r)}\|_{L^{d,1}(\R^d)}}.$$
Below we need to show that the operator $\Lam$ maps $L^{d,1}(\R^d)$ to $L^{d,\infty}(\R^d)$. Since $L^{d,1}(\R^d)$ is a Banach space (see \cite{Ste71}, page 204, Theorem 3.22), it is sufficient to show that $\Lam$ maps the characteristic function $\chi_E\in L^{d,1}(\R^d)$ to $L^{d,\infty}(\R^d)$ (see \cite{Gra249}, page 62, Lemma 1.4.20). However
in this case, it is equivalent to show that
$$\lam|\{x\in\R^d: M(\chi_E)(x)>\lam\}|\lc\|\chi_E\|_{L^1(\R^d)},$$
where $M$ is the Hardy-Littlewood maximal operator. It is well known that $M$ is of weak type (1,1), hence we have shown that $\Lam$ maps $L^{d,1}(\R^d)$ to $L^{d,\infty}(\R^d)$.

Next we consider $II$. This estimate is quite simple. Since the kernel $k(x)=\eps^{-1}|x|^{-d+1}\chi_{\{|x|\leq \eps\}}$ is a radial non-increasing function and $L^1$ integrable in $\R^d$, we get
$$|II|\lc \|k\|_{L^1(\R^d)}|h|M(f)(x).$$
Notice that $L^{p,1}(\R^d)\subset L^p(\R^d)$ and $M$ is of strong type $(p,p)$, $1<p<\infty$, of course those imply that $M$ maps $L^{d,1}(\R^d)$ to $L^{d,\infty}(\R^d)$.

Finally we give an estimate of $III$. Notice that we only consider $|x-y|>2|h|$. Then by the Taylor expansion of $|x-y+h|^{-d+1}$, one may have
\Be\label{e:11taylor}
\fr{1}{|x-y+h|^{d-1}}-\fr{1}{|x-y|^{d-1}}=(-d+1)\sum_{j=1}^dh_j\fr{x_j-y_j}{|x-y|^{d+1}}+R(x,y,h)
\Ee
where the Taylor expansion's remainder term $R(x,y,h)$ satisfies
$$|R(x,y,h)|\leq C|h|^2|x-y|^{-d-1},\ \ |x-y|>2|h|.$$
Inserting \eqref{e:11taylor} into the term $III$ with the above estimate of $R(x,y,h)$, we conclude that
$$|III|\lc |h|\sum_{j=1}^d R_j^*(f)(x)+|h|^2\int_{|x-y|>2|h|}|x-y|^{-d-1}|f(y)|dy$$
where $R_j^*$ is the maximal Riesz transform which is defined by
$$R_j^*(f)(x)=\sup_{\eps>0}\Big|\int_{|x-y|>\eps}\fr{x_j-y_j}{|x-y|^{d+1}}f(y)dy\Big|.$$
Since $R_j^*$ is bounded on $L^p(\R^d)$, $1<p<\infty$, one immediately gets that $R_j^*$ maps $L^{d,1}(\R^d)$ to $L^{d,\infty}(\R^d)$.
The second term which controls $III$ can be dealt with the same as we do in the estimate of $II$ once we notice that the function $\eps|x|^{-d-1}\chi_{\{|x|>\eps\}}$ is radial non-increasing and $L^1$ integrable.
\end{proof}

\begin{lemma}[see Theorem 5.2 in \cite{DL15a}]\label{l:11weak}
Let $f\in L^1(\R^d)$ and $A$ be a Lipschitz function. Then for any $\lam>0$, we have
$$\lam |\{x\in\R^d:|\mathcal{C}[\nabla A,f](x)|>\lam\}|\leq C_{\Om,d}\|\nabla A\|_{L^\infty(\R^d)}\|f\|_{L^1(\R^d)}.$$
\end{lemma}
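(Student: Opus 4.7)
The plan is to adapt Seeger's weak $(1,1)$ argument for rough singular integrals to the bilinear commutator. By homogeneity I may assume $\|\nabla A\|_{L^\infty(\R^d)}=\|f\|_{L^1(\R^d)}=1$, reducing the goal to $|\{x\in\R^d:|\mathcal{C}[\nabla A,f](x)|>\lam\}|\lc \lam^{-1}$.

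First I would perform a Calder\'on--Zygmund decomposition of $f$ at height $\lam$, writing $f=g+\sum_Q b_Q$, where each $b_Q$ is supported on a disjoint dyadic cube $Q$, has mean zero, $\|b_Q\|_{L^1}\lc \lam|Q|$, and $\sum_Q |Q|\lc \lam^{-1}$. Set $E=\bigcup_Q 2Q$, so $|E|\lc \lam^{-1}$. For the good part I would first establish the $L^2$ bound $\|\mathcal{C}[\nabla A,g]\|_{L^2(\R^d)}\lc \|g\|_{L^2(\R^d)}$ (for Lipschitz $A$ and $\Om\in L^1(\mathbf{S}^{d-1})$ this follows by a Fourier-side argument via Plancherel, exploiting that the multiplier is bounded); Chebyshev then yields $|\{|\mathcal{C}[\nabla A,g]|>\lam/2\}|\lc \lam^{-2}\|g\|_{L^2(\R^d)}^2\lc \lam^{-1}$ from $\|g\|_{L^2}^2\lc \lam\|f\|_{L^1}=\lam$.

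The heart of the proof is the bad part estimate $\int_{E^c}|\mathcal{C}[\nabla A,b](x)|\,dx\lc 1$. Because $\Om\in L\log^+L(\mathbf{S}^{d-1})$ does not imply the H\"ormander condition \eqref{e:11reg}, the classical kernel-regularity argument is unavailable. Instead I would decompose the kernel \`a la Seeger: write $\fr{\Om(x)}{|x|^d}=\sum_k K_k(x)$ with $K_k$ supported where $|x|\sim 2^k$, then convolve each $K_k$ against a bump at a sub-scale tied to the scale of the cube $Q$ of interest, splitting $K_k=K_k^{\mathrm{sm}}+(K_k-K_k^{\mathrm{sm}})$. The smooth part $K_k^{\mathrm{sm}}$ enjoys derivative bounds and can be handled by the standard cancellation trick using $\int b_Q=0$; the rough remainder is handled by an almost-orthogonality $TT^*$ estimate in $L^2$, summed over $k$. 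The $L\log^+L$ control of $\Om$ is used precisely to absorb the logarithmic loss inherent in Seeger's decomposition when summing in angular directions of $\Om$.

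The main obstacle I anticipate is the bilinear factor $\fr{A(x)-A(y)}{|x-y|}$, which is merely bounded, not smooth. The idea is to invoke the representation $A(x)-A(y)=\int_0^1\nabla A(y+t(x-y))\cdot(x-y)\,dt$, so that at scale $|x-y|\sim 2^k$ the Lipschitz factor is naturally absorbed into the scale-$2^k$ kernel piece. For the smooth part the derivatives of the mollified $K_k^{\mathrm{sm}}$ compensate for the lack of smoothness of the Lipschitz factor, only its $L^\infty$ bound entering; for the rough remainder the factor acts as a bounded multiplier and does not disturb the almost-orthogonality. Combining these estimates with $|E|\lc \lam^{-1}$ gives the claimed weak $(1,1)$ inequality.
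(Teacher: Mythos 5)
The paper does not prove this lemma at all: it is quoted verbatim from Theorem 5.2 of \cite{DL15a}, and your sketch is indeed aimed in the same direction as that reference (Calder\'on--Zygmund decomposition plus a Seeger-type microlocal decomposition of the rough kernel, with $L\log^+L$ absorbing the logarithmic losses). However, as written your proposal has two genuine gaps, and they sit exactly at the two places where the real work lies. First, the good-part estimate: you claim $\|\mathcal{C}[\nabla A,g]\|_{L^2}\lc\|g\|_{L^2}$ ``by a Fourier-side argument via Plancherel, exploiting that the multiplier is bounded.'' But $\mathcal{C}[\nabla A,\cdot]$ is a non-convolution operator --- the factor $(A(x)-A(y))/|x-y|$ depends on both variables --- so there is no multiplier to which Plancherel applies. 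The $L^2$ bound with constant $\|\nabla A\|_{L^\infty}$ is itself a deep theorem of A.~P.~Calder\'on (for $d=1$ it is the first Calder\'on commutator; for $d\ge 2$ with $\Om\in L\log^+L$ one reduces to it by the method of rotations for odd $\Om$, with a separate argument for the even part using \eqref{e:11Omcan}). It cannot be dispatched in one line, and without it the good-part estimate is unproven.

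Second, and more seriously, the bad-part estimate: you assert that in the $TT^*$/almost-orthogonality step the factor $(A(x)-A(y))/|x-y|$ ``acts as a bounded multiplier and does not disturb the almost-orthogonality.'' This is precisely the crux. Seeger's $L^2$ estimates for the rough remainders exploit the convolution structure of the dyadic kernel pieces (Fourier decay, Plancherel, composition of convolution kernels); once each piece is multiplied by a rough, $x$- and $y$-dependent bounded factor, those tools are no longer available, and a merely bounded factor can destroy cancellation-based $L^2$ bounds. Overcoming exactly this obstruction --- e.g.\ by expanding the difference quotient, exploiting the Lipschitz control at the scale of each cube, and proving new orthogonality estimates for the resulting non-convolution pieces --- is the main content of \cite{DL15a}, whose Theorem 5.2 is the statement at hand. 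Similarly, your cancellation step for $K_k^{\mathrm{sm}}$ needs the splitting $A(x)-A(y)=\bigl(A(x)-A(y_Q)\bigr)+\bigl(A(y_Q)-A(y)\bigr)$ (only the first piece can ride along with $\int b_Q=0$), which you gesture at but do not carry out. In short, the skeleton is the right one, but the two analytic pillars (the $L^2$ theory for the commutator and the almost-orthogonality for the non-convolution rough pieces) are asserted rather than proved, so the proposal does not yet constitute a proof.
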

\vskip0.5cm
\subsection{Proof of (i) in Theorem \ref{t:11}}\quad
\vskip0.2cm
We start to prove (i) of Theorem \ref{t:11}. Let $\fr{1}{r}=\fr{1}{q}+1$ and $q>d$. By using a standard limiting argument, we only need to show that when $A$ and $f$ are $C^\infty$ functions with compact supports, the following inequality
\[ |\{x\in \mathbb{R}^d:|\mathcal{C}[\nabla A,f](x)|>\lambda\}|
\leq C_{\Om,d}\lam^{-r}\|{\nabla A}\|^r_{L^q(\R^d)}\|f\|^r_{L^1(\R^d)} \]
holds for any $\lam>0$ with the constant $C_{\Om,d}$ independent of $\lam$, $A$ and $f$.
By a simple scaling argument, we may assume that
$$\|f\|_{L^1(\R^d)}=\|\nabla A\|_{L^q(\R^d)}=\|\Om\|_{L\log^+L(\S^{d-1})}=1.$$

Now fix $\lam>0$. For convenience set $E_\lam=\{x\in\R^d:|\mathcal{C}[\nabla A,f](x)|>\lam\}$
and define the {\it except set}
\Bes
\begin{split}
J_{\lam}=\big\{x\in\R^d:\M (\nabla A)(x)>\lam^{\fr{r}{q}}\big\}.
\end{split}
\Ees
We need to show $|E_\lam|\lc\lam^{-r}.$ From Lemma \ref{l:mw}, $\M$ is bounded on $L^q(\R^d)$ with $q>d$. Hence $\M$ maps $L^q(\R^d)$ to $L^{q,\infty}(\R^d)$, i.e.
\Be\label{e:11jlam}
\begin{split}
|J_{\lam}|\lc\lam^{-r}\|\nabla A\|^{q}_{L^q(\R^d)}=\lam^{-r}.
\end{split}
\Ee

Choose an open set $G_\lam$ which satisfies the following conditions:
(1) $J_\lam\subset G_\lam$;
(2) $m(G_\lam)\leq2|J_\lam|$.
By the property \eqref{e:11jlam} of $J_\lam$, we see that $m(G_\lam)\lc\lam^{-r}$. Next making a Whitney decomposition of $G_\lam$ (see \cite{Gra249}), we may get a family of disjoint dyadic cubes $\{Q_k\}_k$ such that
\begin{enumerate}[(i).]
\item \quad $G_\lam=\bigcup_{k=1}^\infty Q_k$;
\item \quad $\sqrt{d}\cdot l(Q_k)\leq dist(Q_k,(G_\lam)^c)\leq4\sqrt{d}\cdot l(Q_k).$
\end{enumerate}
With those properties (i) and (ii), for each $Q_k$, we could construct a larger cube $Q_k^*$ so that $Q_k\subset Q_k^*$, $Q_k^*$ is centered at $y_k$ and $y_k\in (G_\lam)^c$, $|Q_k^*|\leq C|Q_k|$. The constant $C$ here is only dependent on the dimension. By the property (ii) above, the distance between $Q_k$ and $(G_\lam)^c$ equals to $Cl(Q_k)$.
Therefore by the construction of $Q_k^*$ and $y_k$, we get
\Be\label{e:11whitney}
dist(y_k,Q_k)\approx l(Q_k).
\Ee

Now we return to give an estimate of $E_\lam$. Split $f$ into two parts $f=f_1+f_2$ where $f_1(x)=f(x)\chi_{(G_\lam)^c}(x)$ and $f_2(x)=f(x)\chi_{G_\lam}(x)$. By the definition of $J_\lam$, when restricted on $(G_\lam)^c$, $A$ is a Lipschitz function with $\|\nabla A\|_{L^\infty((G_\lam)^c)}\leq\lam^{\fr{r}{q}}$. Let $\tilde{A}$ stand for the Lipschitz extension of $A$ from $(G_\lam)^c$ to $\R^d$ (see \cite{Ste70}, page 174, Theorem 3) so that
$$\tilde{A}(y)=A(y)\ \ \text{if} \ y\in (G_\lam)^c;$$
$$\big|\tilde{A}(x)-\tilde{A}(y)\big|\leq\lam^{\fr{r}{q}}|x-y|\ \ \text{for all}\ x,y\in\R^d.$$

Since the operator $\mathcal{C}[\cdot,\cdot]$ is bilinear, we split $E_\lam$ as three terms
\Bes
\begin{split}
|\{x\in\R^d: &|\mathcal{C}[\nabla A,f](x)|>\lambda\}|\\
&\leq |10G_\lam|+\big|\{x\in (10G_\lam)^c:|\mathcal{C}[\nabla A,f_1](x)|>\lambda/2\}\big|\\
&\ \ \ \ +\big|\{x\in (10G_\lam)^c:|\mathcal{C}[\nabla A,f_2](x)|>\lambda/2\}\big|.
\end{split}
\Ees
The first term above satisfies $|10G_\lam|\lc\lam^{-r}$, which is the required bound. In the following, we only consider $x\in(10G_\lam)^c$. By the definition of $f_1$, we see that $\mathcal{C}[\nabla A,f_1](x)=\mathcal{C}[\nabla\tilde{A},f_1](x)$.
With this equality in hand, Lemma \ref{l:11weak} implies
\Bes
\begin{split}
\big|\big\{x\in (10G_\lam)^c:& |\mathcal{C}[\nabla A,f_1](x)|>{\lam}/{2}\big\}\big|\\
&=\big|\big\{x\in (10G_\lam)^c: |\mathcal{C}[\nabla \tilde{A},f_1](x)|>{\lam}/{2}\big\}\big|\\
&\leq \lam^{-1}C_{\Om,d}\|\nabla\tilde{A}\|_{L^\infty(\R^d)}\|f_1\|_{L^1(\R^d)}\lc\lam^{-1+\fr{r}{q}}=\lam^{-r}.
\end{split}
\Ees

Let us turn to $\mathcal{C}[\nabla A,f_2](x)$, which can be rewritten as
\Be\label{e:11f2}
\mathcal{C}[\nabla A,f_2](x)=
\mathcal{C}[\nabla\tilde{A},f_2](x)+\int_{\R^d}\fr{\Om(x-y)}{|x-y|^d}\fr{\tilde{A}(y)-A(y)}{|x-y|}f_{2}(y)dy.
\Ee
Using the similar method of dealing with $\mathcal{C}[\nabla\tilde{A},f_1]$, we may get $$|\{x\in(10G_\lam)^c:|\mathcal{C}[\nabla\tilde{A},f_2](x)|>\lam/4\}|\lc\lam^{-r}.$$
Therefore it remains to consider the second term in \eqref{e:11f2}. Using the notation in the Whitney decomposition of $G_\lam$, we may write
\Bes
\begin{split}
\int_{\R^d}\fr{\Om(x-y)}{|x-y|^d}\fr{\tilde{A}(y)-A(y)}{|x-y|}f_{2}(y)dy&=\sum_k\int_{Q_k}\fr{\Om(x-y)}{|x-y|^d}\fr{\tilde{A}(y)-A(y)}{|x-y|}f(y)dy\\
&=:H(\tilde{A},f)(x)-H(A,f)(x),
\end{split}
\Ees
where
$$H(A,f)(x)=\sum_k\int_{Q_k}\fr{\Om(x-y)}{|x-y|^d}\fr{A(y)-A(y_k)}{|x-y|}f(y)dy.$$

By using the Chebyshev inequality, the Fubini theorem and $\tilde{A}$ is a Lipschitz function with Lipschitz bound $\lam^{r/q}$, we conclude that
\Bes
\begin{split}
|\{x\in(10G_\lam)^c&: |H(\tilde{A},f)(x)|>\lam/8\}|\\
&\lc\lam^{-1}\int_{(10G_\lam)^c}\sum_k\int_{Q_k}\fr{|\Om(x-y)|}{|x-y|^d}\fr{|\tilde{A}(y)-\tilde{A}(y_k)|}{|x-y|}|f(y)|dy\\
&\lc\lam^{-1+\fr{r}{q}}\sum_k\int_{Q_k}|y_k-y|\Big(\int_{(10G_\lam)^c}\fr{|\Om(x-y)|}{|x-y|^{d+1}}dx\Big)|f(y)|dy\\
&\lc\lam^{-r}\|\Om\|_{L^1(\S^{d-1})}\sum_k\int_{Q_k}|f(y)|dy\lc\lam^{-r},
\end{split}
\Ees
where in the second inequality we use the fact: $|x-y|\geq l(Q_k)\approx|y-y_k|$, since $x\in (10G_\lam)^c$ and \eqref{e:11whitney};
and in the last inequality we use $\|\Om\|_{L^1(\S^{d-1})}\leq\|\Om\|_{L\log^+L(\S^{d-1})}=1$.

Notice that by the construction of $y_k$, we have $y_k\in(G_\lam)^c$. It follows that $\mathcal{M}(\nabla A)(y_k)\leq\lam^{\fr{r}{q}}$. By using the Chebyshev inequality, the Fubini theorem and the above fact, we get
\Bes
\begin{split}
|\{x\in&(10G_\lam)^c:|H(A,f)(x)|>\lam\}|\\
&\lc\lam^{-1}\int_{(10G_\lam)^c}\sum_k\int_{Q_k}\fr{|\Om(x-y)|}{|x-y|^d}\fr{|A(y)-A(y_k)|}{|x-y|}|f(y)|dy\\
&\lc\lam^{-1}\sum_k\int_{Q_k}\mathcal{M}(\nabla A)(y_k)|y-y_k|\Big|\int_{(10G_\lam)^c}\fr{|\Om(x-y)|}{|x-y|^{d+1}}dx\Big||f(y)|dy\\
&\lc\lam^{-1+\fr{r}{q}}\|\Om\|_{L^1(\S^{d-1})}\sum_k\int_{Q_k}|f(y)|dy\lc\lam^{-r},
\end{split}
\Ees
where the third inequality follows from $|x-y|\geq l(Q_k)\approx|y-y_k|$. Therefore we complete the proof of (i) in Theorem \ref{t:11}.
$\hfill{} \Box$
\vskip0.5cm
\subsection{Proof of (ii) in Theorem \ref{t:11}}\quad
\vskip 0.2cm
The proof of (ii) is similar to that of (i) in Theorem \ref{t:11} once we choose $q=d$. The only difference is that when we give an estimate of {\it except set} $J_\lam$, we will use Lemma \ref{l:11md} instead of Lemma \ref{l:mw}. Proceeding the rest proof as we do in the proof of (i), we may obtain the result of (ii).
$\hfill{} \Box$
\vskip0.5cm
\subsection*{Acknowledgement} The author would like to thanks A. Seeger
for suggesting to consider the Lorentz space $L^{d,1}(\R^d)$ endpoint estimate of the Mary Weiss maximal operator $\M$.
\vskip1cm

\bibliographystyle{amsplain}

\end{document}